\documentclass[abstract,twocolumn,10pt,DIV=18]{scrartcl}

\usepackage{amsmath}
\usepackage{amssymb}
\usepackage{amsthm}
\usepackage{enumitem}
\usepackage[colorlinks = true, linkcolor = blue, citecolor = blue]{hyperref}
\usepackage[nameinlink]{cleveref}
\usepackage{graphicx}

\setcapindent{0pt}
\setkomafont{caption}{\itshape}
\setkomafont{captionlabel}{\bfseries}
\setkomafont{caption}{\footnotesize}

\newtheorem{assumption}{Assumption}
\newtheorem{definition}{Definition}
\newtheorem{example}{Example}
\newtheorem{proposition}{Proposition}
\newtheorem{remark}{Remark}
\newtheorem{theorem}{Theorem}

\crefname{equation}{}{}
\crefname{assumption}{Assumption}{Assumptions}
\crefname{definition}{Definition}{Definitions}
\crefname{proposition}{Proposition}{Propositions}

\begin{document}
\title{Averaging of Random Vibrations in

Mechanical Systems in the Sense of

It{\^o}, Stratonovich, and Sussmann}
\author{Raik Suttner and Christian Ebenbauer \thanks{The authors are with the Chair of Intelligent Control Systems, RWTH Aachen University, Aachen, Germany (e-mail: \{raik.suttner, christian.eben-bauer\}@ic.rwth-aachen.de).} \thanks{This research was supported by the German Research Foundation DFG, project number EB 425/8-1.}}

\date{}
\maketitle

\begin{abstract}
In this paper, we investigate a stochastic averaging principle for a large class of mechanical systems in the presence of random vibrations. We show that a known deterministic averaging principle for affine connection control systems with large-amplitude high-frequency inputs also holds for non-periodic stochastic inputs. The randomly vibrating mechanical system is described by a stochastic differential equation whose solutions do not depend on its interpretation in the sense of It{\^o}, Stratonovich, or Sussmann. We also show that solutions of this stochastic differential equation can be directly computed from a single ordinary differential equation. We illustrate our theoretical results by the example of stochastic source seeking with a nonholonomic vehicle.
\end{abstract}

\section{Introduction}\label{sec:01}
Deterministic vibrational control of mechanical systems is a classical and intensively studied problem \cite{Meerkov1980}, \cite{Bellman1983}, \cite{Levi1999}, \cite{Bullo2002}, \cite{Fidlin2008}, which is motivated by a variety of applications. One can use vibrational control to overpower potentially unstable dynamics. For instance, it is well-know that the Kapitza pendulum \cite{Kapitza1965} can be stabilized in an inverted position through a rapidly vibrating suspension. Similar stabilization effects due to strong high-frequency forcing are also known to occur, more generally, for a large class of mechanical systems \cite{Bellman1986}, \cite{Baillieul1993}, \cite{Taha2020}. Another field of applications for oscillatory inputs with large amplitudes and high frequencies is the problem of trajectory tracking by mechanical control systems \cite{Martinez2003}, \cite{Tahmasian2015}. Recently, such input signals have also been used to design extremum seeking control for mechanical systems \cite{Suttner20234}, \cite{Wang2023}.

The analysis of mechanical systems under vibrational control is often based on suitable averaging techniques \cite{Meerkov1980}, \cite{Weibel1998}, \cite{Vela2002}, \cite{Bullo2002}. Our investigations are motivated by the averaging approach in \cite{Bullo2002} for the class of so-called affine connection control systems. This large class contains, in particular, Lagrangian control systems on Riemannian manifolds with nonholonomic constraints \cite{Lewis1998}. Such systems are typically of control-affine form so that the inputs act as forces into the direction of control vector fields. The inputs are assumed to be given by periodic zero-mean functions with large amplitudes and high frequencies. It is shown in \cite{Bullo2002} that, for sufficiently large amplitudes and frequencies, the system under vibrational control approximates the motion of an averaged system. The averaged system is again an affine connection system, which is driven into directions of so-called \emph{symmetric products} of the control vector fields. One can show (see \cite{Lewis19972}) that symmetric products arise from Lie brackets on the state manifold (i.e. the tangent bundle of the configuration manifold).

To the best of our knowledge, the existing averaging theory is limited to approximations of symmetric products by deterministic vibrational control. One may ask: What happens if \emph{vibrational control} is replaced by \emph{random vibration}? This question is, in particular, motivated by the vast literature on stochastic control for different classes of deterministic systems, which includes linear finite-dimensional systems \cite{Arnold1983}, \cite{Kao1994}, \cite{Arnold1996}, \cite{Kao2000}, non-linear finite-dimensional systems \cite{Mao1994}, \cite{Appleby2008}, \cite{Huang2013}, \cite{Hoshino2016}, and infinite-dimensional systems \cite{Caraballo2004}, \cite{Barbu2011}. For example, the Kapitza pendulum can be stabilized in an inverted position if the suspension is driven by rapid random vibrations \cite{Ovseyevich2006}. Another practical example is the problem of stochastic source seeking, which is inspired by the motion of bacteria toward areas with higher food concentration \cite{Stankovic2009}, \cite{Liu20102}, \cite{Lin20172}. We will address the latter problem in the main part of the paper as an application of our general stochastic averaging result.

A deterministic control system under a stochastic control law is, in general, described by a stochastic differential equation (SDE). Here, we are interested in control by ``large and fast'' random inputs, which is the stochastic analog to deterministic large-amplitude high-frequency inputs. As in the deterministic case, it is natural to investigate the behavior of a randomly vibrating system by means of a suitable averaging analysis. Stochastic versions of the classical Krylov--Bogolyubov averaging principle are well-documented, e.g., in the textbooks \cite{SkorokhodBook}, \cite{FreidlinBook}, \cite{LiuBook}. The key step in our analysis is a suitable change of variables which turns the SDE into an ordinary differential equation (ODE) with random perturbations. Then, we can apply a standard stochastic averaging argument to the randomly perturbed ODE, which in turn allows us to relate the SDE to a deterministic averaged system. Our findings overlap to some degree with the averaging result in \cite{Kao1994} for deterministic linear systems in companion form under parametric noise with large intensity. Indeed, some very simple mechanical systems are linear and of companion form as in \cite{Kao1994}, but in general, mechanical systems are non-linear and their configuration manifold is not a vector space. There is also a certain intersection of our findings with the averaging result in \cite{Meerkov1989} for randomly perturbed ordinary differential equations. However, the analysis in \cite{Meerkov1989} does not cover stochastic differential equations and there is no guarantee that an averaged system actually exists.

The main contributions of this paper are as follows. We generalize the deterministic averaging principle from \cite{Bullo2002} to a stochastic setting. Under suitable assumptions on the employed stochastic input signals, we prove that the solutions of the randomly vibrating system approximate the solutions of an averaged system, which contains symmetric products of the control vector fields. The results obtained here might be useful for future derivations of stochastic controllability and stabilization conditions. We also show that the \emph{It{\^o}--Stratonovich controversy} (see e.g.~\cite{vanKampen1981}) does not arise for the considered class of stochastic affine connection systems. In addition, we provide two path-wise ODE interpretations of the SDE, including the one by Sussmann \cite{Sussmann19782}, and show that all of these interpretations coincide. As a practical application, we present the first stochastic source seeking method for the dynamic (force and torque-controlled) unicycle model.

\section{Motivation: Averaging of Affine Connection Systems with Deterministic Vibrations}\label{sec:02}
In this section, we recall the deterministic averaging principle from \cite{Bullo2002}, which is also extensively explained in the textbook \cite{BulloBook}. Let $\text{\sffamily{\upshape{Q}}}$ be a (topologically) closed submanifold of Euclidean space and let $\nabla$ be an affine connection on $\text{\sffamily{\upshape{Q}}}$. Let $Y_0$, $Y_1$, $\ldots$, $Y_m$ be smooth vector fields on $\text{\sffamily{\upshape{Q}}}$ and let $R$ be a bundle map over the identity of $\text{\sffamily{\upshape{Q}}}$. Let $u\colon[\vphantom{]}0,\infty\vphantom{(})\to\mathbb{R}^m$ be continuously differentiable. For every $\varepsilon>0$, define $u_\varepsilon\colon[\vphantom{]}0,\infty\vphantom{(})\to\mathbb{R}^m$ by $u_\varepsilon(t):=u(t/\varepsilon)$, and let $u_\varepsilon^1,\ldots,u_\varepsilon^m$ denote the component functions of $u_\varepsilon$. Fix an arbitrary element $x_0=(q_0,v_0)$ of the tangent bundle $\text{\sffamily{\upshape{TQ}}}$. For every $\varepsilon>0$, we consider the affine connection system%
\begin{subequations}\label{eq:01}%
\begin{align}
\nabla_D\dot{q}(t) & \ = \ Y_0(q(t)) + R(q(t))\dot{q}(t) \label{eq:01:a} \\
& \qquad + \sum_{i=1}^mY_i(q(t))\,\dot{u}_\varepsilon^i(t) \label{eq:01:b} \\
\intertext{on $\text{\sffamily{\upshape{Q}}}$ with initial condition}
q(0) & \ = \ q_0, \qquad \dot{q}(0) \ = \ v_0. \label{eq:01:c}
\end{align}%
\end{subequations}%
If $u$ is periodic with zero mean, then, for small $\varepsilon>0$, each function $\dot{u}_\varepsilon^i$ acts as a large-amplitude high-frequency input along the direction of the respective control vector field $Y_i$. The second-order differential equation \cref{eq:01:a,eq:01:b} on the configuration manifold $\text{\sffamily{\upshape{Q}}}$ with initial condition \cref{eq:01:c} is equivalent to the first-order differential equation%
\begin{subequations}\label{eq:02}%
\begin{align}
\dot{x}(t) & \ = \ S(x(t)) + Y_0^{\text{v}}(x(t)) + R^{\text{v}}(x(t)) \label{eq:02:a} \\
& \qquad + \sum_{i=1}^mY_i^{\text{v}}(x(t))\,\dot{u}_\varepsilon^i(t) \label{eq:02:b} \\
\intertext{on the state manifold $\text{\sffamily{\upshape{TQ}}}$ with initial condition}
x(0) & \ = \ x_0 \ = \ (q_0,v_0), \label{eq:02:c}
\end{align}%
\end{subequations}%
where $S$ is the geodesic spray for $\nabla$ and $Y_1^{\text{v}},\ldots,Y_m^{\text{v}},R^{\text{v}}$ are the vertical lifts of $Y_1,\ldots,Y_m,R$ (see \cite{BulloBook}). The solution $x$ of \cref{eq:02} is of the form $x=(q,\dot{q})$, where $q$ is the solution of \cref{eq:01}.

Next, we will perform a suitable change of variables that turns \cref{eq:02} into a form in which standard averaging results can be applied. To this end, for every $\mathbb{R}^m$-valued curve $w$ with component functions $w^1,\ldots,w^m$, define a time-varying vector field $\Xi_w$ on $Q$ by%
\begin{equation}\label{eq:03}
\Xi_w(t,q) \ := \ \sum_{i=1}^mY_i(q)\,w^i(t).
\end{equation}%
Given any solution $x=(q,\dot{q})$ of \cref{eq:02}, we can define another curve $\tilde{x}$ in $\text{\sffamily{\upshape{TQ}}}$ by%
\begin{equation}\label{eq:04}
\tilde{x}(t) \ := \ \big(q(t),\dot{q}(t)-\Xi_u(t/\varepsilon,q(t))\big).
\end{equation}%
Notice that $\tilde{x}=(\tilde{q},\tilde{v})$ is not necessarily of the form $\tilde{x}=(\tilde{q},\dot{\tilde{q}})$. It is shown in \cite{Bullo2002} that $\tilde{x}$ is a solution of the initial value problem%
\begin{subequations}\label{eq:05}%
\begin{align}
\dot{\tilde{x}}(t) & \ = \ S(\tilde{x}(t)) + Y_0^{\text{v}}(\tilde{x}(t)) + R^{\text{v}}(\tilde{x}(t)) \label{eq:05:a} \\
& \qquad + \sum_{i=1}^mu_\varepsilon^i(t)\,\big[Y_i^{\text{v}},S+R^{\text{v}}\big](\tilde{x}(t)) \label{eq:05:b} \\
& \qquad - \frac{1}{2}\sum_{i,j=1}^mu_\varepsilon^i(t)\,u_\varepsilon^j(t)\,\langle{Y_i\,\text{:}\,Y_j}\rangle^{\text{v}}(\tilde{x}(t)), \label{eq:05:c} \\
\tilde{x}(0) & \ = \ \big(q_0, v_0-\Xi_u(0,q_0)\big), \label{eq:05:d}
\end{align}%
\end{subequations}%
where $\langle\cdot\,\text{:}\,\cdot\rangle$ denotes the symmetric product with respect to $\nabla$.%
\begin{remark}
The change of variables \cref{eq:04} will be the key step in our approach to stochastic affine connection systems. Notice that the transformed initial value problem \cref{eq:05} has a well-defined unique solution even if the $\mathbb{R}^m$-valued function $u$ is not differentiable but merely continuous. Thus, \cref{eq:05} has a well-defined path-wise solution if $u$ is replaced by an $\mathbb{R}^m$-valued stochastic process with almost surely continuous sample paths. The expression on the right-hand side of \cref{eq:05:a}-\cref{eq:05:c} is given by an infinite sum of Lie brackets, but, as it is shown in \cite{Bullo2002}, all but finitely many of these Lie brackets vanish. In particular, the symmetric products in \cref{eq:05:c} arise from iterated Lie brackets of the geodesic spray $S$ for $\nabla$ and the vertical lifts of the control vector fields $Y_1,\ldots,Y_m$.
\end{remark}%
A standard averaging argument can be applied to \cref{eq:05} if the deterministic oscillation $u$ has the following properties.%
\begin{assumption}[periodicity]\label{ass:Bullo}
The continuously differentiable function $u$ is periodic and satisfies the zero-mean condition%
\begin{equation}\label{eq:06}
\lim_{T\to\infty}\frac{1}{T}\int_0^Tu(\tau)\,\mathrm{d}\tau \ = \ 0.
\end{equation}%
\end{assumption}%
\begin{example}\label{exmp:Bullo}
Let $B^1,\ldots,B^m$ be pairwise distinct positive integers. If, for every $i\in\{1,\ldots,m\}$, the function $u^i$ is given by $u^i(t)=\sin(B^i{t})$, then \Cref{ass:Bullo} is satisfied. Later, in \Cref{exmp:2}, we will replace $(B^1t,\ldots,B^mt)$ by Brownian motion $(B^1(t),\ldots,B^m(t))$ in $\mathbb{R}^m$.
\end{example}%
Suppose that \Cref{ass:Bullo} is satisfied. Then, we can define the $(m\times{m})$-matrix%
\begin{equation}\label{eq:07}
\Lambda \ := \ \frac{1}{2}\lim_{T\to\infty}\frac{1}{T}\int_0^Tu(\tau)\,u(\tau)^\top\,\mathrm{d}\tau
\end{equation}%
whose components are denoted by $\Lambda^{ij}$. In this notation, we can introduce the \emph{averaged system}%
\begin{subequations}\label{eq:08}%
\begin{align}
\dot{\bar{x}}(t) & \ = \ S(\bar{x}(t)) + Y_0^{\text{v}}(\bar{x}(t)) + R^{\text{v}}(\bar{x}(t)) \\
& \qquad- \sum_{i,j=1}^m\Lambda^{ij}\,\langle{Y_i\,\text{:}\,Y_j}\rangle^{\text{v}}(\bar{x}(t)) \\
\intertext{on $\text{\sffamily{\upshape{TQ}}}$ with initial condition}
\bar{x}(0) & \ = \ \big(q_0, v_0-\Xi_u(0,q_0)\big).
\end{align}%
\end{subequations}%
The solution $\bar{x}$ of \cref{eq:08} is of the form $\bar{x}=(\bar{q},\dot{\bar{q}})$, where $\bar{q}$ is the solution of the affine connection system%
\begin{subequations}\label{eq:09}%
\begin{align}
\nabla_D\dot{\bar{q}}(t) & \ = \ Y_0(\bar{q}(t)) + R(\bar{q}(t))\dot{\bar{q}}(t) \label{eq:09:a} \\
& \qquad - \sum_{i,j=1}^m\Lambda^{ij}\,\langle{Y_i\,\text{:}\,Y_j}\rangle(\bar{q}(t)) \label{eq:09:b} \\
\intertext{on $\text{\sffamily{\upshape{Q}}}$ with initial condition}
\bar{q}(0) & \ = \ q_0, \qquad \dot{\bar{q}}(0) \ = \ v_0 - \Xi_u(0,q_0). \label{eq:09:c}
\end{align}%
\end{subequations}%
Theorem~4.1 in \cite{Bullo2002} states that the solution of \cref{eq:01} in the variables \cref{eq:04} approximate the solution of \cref{eq:09} in the limit $\varepsilon\to0$. Moreover, stability properties of the averaged system carry over to the approximating oscillatory system. One of our goals in this paper is to generalize Theorem~4.1 in \cite{Bullo2002} to the case in which the vibrations are generated by a stochastic process with possibly non-differentiable paths.

\section{Affine Connection Systems with Random Vibrations}\label{sec:03}
Now we replace the driving input $u_\varepsilon$ in the deterministic affine connection system~\cref{eq:01:a,eq:01:b} by a stochastic process $U_\varepsilon$. To this end, we adopt the following terminology and conventions from the textbook \cite{HsuBook}. We work with a probability space $(\Omega,\mathcal{F},P)$ equipped with a filtration $\mathcal{F}_\ast=(\mathcal{F}_t)_{t\geq0}$ of $\mathcal{F}$. As usual, it assumed that $\mathcal{F}_0$ contains all $P$-null sets and that $\mathcal{F}_\ast$ is right-continuous. Let $U\colon[\vphantom{]}0,\infty\vphantom{(})\times\Omega\to\mathbb{R}^m$ be a continuous $\mathcal{F}_\ast$-semimartingale (see \cite{HsuBook}). The semimartingale $U$ is the stochastic generalization of the deterministic function $u$ in \Cref{sec:02}. The component functions of $U$ are denoted by $U^1$, $\ldots$, $U^m$. For every $\varepsilon>0$ and every $i\in\{1,\ldots,m\}$, define a continuous real-valued semimartingale $U_\varepsilon^i$ by $U_\varepsilon^i(t,\omega):=U^i(t/\varepsilon,\omega)$. Then, in particular, for every $t\geq0$, the function $U_\varepsilon^i(t)\colon\Omega\to\mathbb{R}$, $\omega\mapsto{U_\varepsilon^i(t,\omega)}$ is an $\mathcal{F}_{t/\varepsilon}$-random variable. More generally, for any stochastic process $Z$ on $(\Omega,\mathcal{F},P)$ and every $t\geq0$, we let $Z(t)$ denote the random variable $\omega\mapsto{Z(t,\omega)}$. Finally, we fix an arbitrary $\text{\sffamily{\upshape{TQ}}}$-valued $\mathcal{F}_0$-random variable $X_0=(Q_0,V_0)$.

In view of the deterministic system~\cref{eq:01}, for every $\varepsilon>0$, we consider the purely formal expression%
\begin{subequations}\label{eq:10}%
\begin{align}
\nabla_D\dot{Q}(t) & \ = \ Y_0(Q(t)) + R(Q(t))\dot{Q}(t) \label{eq:10:a} \\
& \qquad + \sum_{i=1}^mY_i(Q(t))\,\dot{U}_\varepsilon^i(t), \label{eq:10:b} \\
Q(0) & \ = \ Q_0, \qquad V(0) \ = \ V_0, \label{eq:10:c}
\end{align}%
\end{subequations}%
where $\nabla$, $Y_0,Y_1,\ldots,Y_m$, and $R$ are the same as in \Cref{sec:02}. At the moment, the formal expression \cref{eq:10} has no mathematical meaning. In general, it is not admissible to interpret the expression $\dot{U}_\varepsilon^i(t)$ in \cref{eq:10:b} as an ordinary time derivative since the paths of a continuous semimartingale are not necessarily continuously differentiable. Because of the equivalence of~\cref{eq:01} and~\cref{eq:02}, it is natural to regard \cref{eq:10} as (the still purely formal expression)%
\begin{subequations}\label{eq:11}%
\begin{align}
\dot{X}(t) & \ = \ S(X(t)) + Y_0^{\text{v}}(X(t)) + R^{\text{v}}(X(t)) \label{eq:11:a} \\
& \qquad + \sum_{i=1}^mY_i^{\text{v}}(X(t))\,\dot{U}_\varepsilon^i(t), \label{eq:11:b} \\
X(0) & \ = \ X_0. \label{eq:11:c}
\end{align}%
\end{subequations}%
In the next section, we will interpret \cref{eq:11} in four different ways and then show that all four interpretations are actually the same. This will allow us to provide a precise definition of a solution of \cref{eq:10}.

\section{Four Interpretations and Their Equivalence}\label{sec:04}
\subsection{Two Stochastic Integral Interpretations}
Since the state space $\text{\sffamily{\upshape{TQ}}}$ is a manifold, it is natural to interpret~\cref{eq:11} as the stochastic initial value problem%
\begin{subequations}\label{eq:12}%
\begin{align}
\mathrm{d}X(t) & \ = \ \big(S(X(t)) + Y_0^{\text{v}}(X(t)) + R^{\text{v}}(X(t))\big)\,\mathrm{d}t \label{eq:12:a} \\
& \qquad + \sum_{i=1}^mY_i^{\text{v}}(X(t))\circ\mathrm{d}U_\varepsilon^i(t), \label{eq:12:b} \\
X(0) & \ = \ X_0 \label{eq:12:c}
\end{align}%
\end{subequations}%
in the sense of Stratonovich. This first interpretation allows us to define a solution of \cref{eq:11} as follows.%
\begin{definition}\label{def:stochasticSolutionStratonovich}
Let $C^\infty(\text{\sffamily{\upshape{TQ}}})$ denote the set of all smooth real-valued functions on $\text{\sffamily{\upshape{TQ}}}$. A \emph{solution of \cref{eq:11} in the sense of Stratonovich} is a solution of the stochastic initial value problem \cref{eq:12}; that is (see \cite[Definition~1.2.3]{HsuBook}), a continuous $\text{\sffamily{\upshape{TQ}}}$-valued $\mathcal{F}_\ast$-semimartingale $X$ up to an $\mathcal{F}_\ast$-stopping time $\tau$ such that, for every $f\in{C^\infty(\text{\sffamily{\upshape{TQ}}})}$, the real-valued process $f(X)$ satisfies the stochastic integral equation%
\begin{subequations}\label{eq:13}%
\begin{align}
& f(X(t)) \ = \ f(X_0) \label{eq:13:a} \\
& + \int_0^t\big(Sf(X(s)) + Y_0^{\text{v}}f(X(s)) + R^{\text{v}}f(X(s))\big)\,\mathrm{d}s \label{eq:13:b} \\
& + \sum_{i=1}^m\int_0^tY_i^{\text{v}}f(X(s))\circ\mathrm{d}U_\varepsilon^i(s), \qquad 0\leq{t}<\tau, \label{eq:13:c}
\end{align}%
\end{subequations}%
where the $m$ stochastic integrals in \cref{eq:13:c} are understood in the sense of Stratonovich.
\end{definition}%
The following (standard) existence and uniqueness result confirms that \Cref{def:stochasticSolutionStratonovich} makes sense.%
\begin{proposition}[{\cite[Theorem~1.2.9]{HsuBook}}]\label{prop:existenceUniquenessAndInvariance}
There exists a unique solution of \cref{eq:11} in the sense of Stratonovich up to its explosion time.
\end{proposition}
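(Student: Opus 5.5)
The plan is to reduce the statement to the general existence and uniqueness theorem for Stratonovich SDEs on manifolds, \cite[Theorem~1.2.9]{HsuBook}. That theorem treats equations of the form $\mathrm{d}X = \sum_{\alpha} V_\alpha(X)\circ\mathrm{d}Z^\alpha$, where the $V_\alpha$ are smooth vector fields on a manifold $M$ and $Z$ is an $\mathbb{R}^l$-valued continuous semimartingale. I would therefore write \cref{eq:12} in exactly this form, with:
\begin{itemize}
\item $M=\text{\sffamily{\upshape{TQ}}}$ and $l=m+1$;
\item driving semimartingale $Z(t)=(t,U_\varepsilon^1(t),\ldots,U_\varepsilon^m(t))$;
\item vector fields $V_0=S+Y_0^{\text{v}}+R^{\text{v}}$ and $V_i=Y_i^{\text{v}}$ for $i=1,\ldots,m$.
\end{itemize}
The time component $Z^0(t)=t$ has finite variation, so its Stratonovich integral coincides with the Lebesgue--Stieltjes integral $\mathrm{d}s$. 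Hence \cref{eq:13} is precisely the defining identity in \cite[Definition~1.2.3]{HsuBook} for this choice of data.

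The key steps, in order, are as follows. First, I verify that $\text{\sffamily{\upshape{TQ}}}$ is a smooth manifold. Since $\text{\sffamily{\upshape{Q}}}$ is a closed submanifold of Euclidean space, $\text{\sffamily{\upshape{TQ}}}$ is in turn a closed submanifold of a Euclidean space $\mathbb{R}^{2N}$, which is the setting in which Hsu embeds manifolds. Second, I check that every $V_\alpha$ is a smooth vector field on $\text{\sffamily{\upshape{TQ}}}$:
\begin{itemize}
\item the geodesic spray $S$ of a smooth affine connection is smooth;
\item vertical lifts of smooth vector fields, and of the smooth bundle map $R$, are smooth.
\end{itemize}
Third, I check that $Z$ is a continuous $\mathcal{F}_\ast$-semimartingale. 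The rescaling $t\mapsto t/\varepsilon$ is deterministic, so $U_\varepsilon$ is a continuous semimartingale, but only with respect to the time-changed filtration $(\mathcal{F}_{t/\varepsilon})_{t\geq0}$. I would therefore apply the theorem on the filtered space with this filtration. Alternatively, I would note that the paper implicitly takes this filtration when it speaks of $U_\varepsilon$ as a semimartingale. This filtration is still right-continuous and complete, and $X_0$ is measurable with respect to $\mathcal{F}_0$.

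With these hypotheses in place, \cite[Theorem~1.2.9]{HsuBook} gives a unique solution $X$ up to its explosion time $e$. Uniqueness holds in the sense that any solution up to a stopping time $\tau$ coincides with $X$ on $[0,\tau)$ and satisfies $\tau\le e$. This is exactly the assertion.

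The only step I expect to need care is the filtration bookkeeping in the third step, together with the fact that Hsu's theorem requires no growth conditions. On a noncompact $\text{\sffamily{\upshape{TQ}}}$ the vector fields are not globally Lipschitz, which is why the result holds only up to explosion time. Hsu's proof handles this by localization, extending the vector fields to compactly supported smooth fields on the ambient Euclidean space. That localization is valid here precisely because $\text{\sffamily{\upshape{TQ}}}$ is closed in the ambient space, so the argument requires nothing beyond citing the theorem.
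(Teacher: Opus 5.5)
Your proposal is correct and takes the same route as the paper, whose proof is simply the citation of \cite[Theorem~1.2.9]{HsuBook}; you make that reduction explicit by taking the driving semimartingale $Z=(t,U_\varepsilon^1,\ldots,U_\varepsilon^m)$ with the smooth vector fields $S+Y_0^{\text{v}}+R^{\text{v}}$ and $Y_1^{\text{v}},\ldots,Y_m^{\text{v}}$ on $\text{\sffamily{\upshape{TQ}}}$. Your observation that one must use the time-changed filtration $(\mathcal{F}_{t/\varepsilon})_{t\geq0}$, because $U_\varepsilon(t)$ is only $\mathcal{F}_{t/\varepsilon}$-measurable, is a genuine bookkeeping point that the paper's formulation (which asks for an $\mathcal{F}_\ast$-semimartingale) glosses over, and your fix is the right one.
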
%
In general, a stochastic differential equation on a manifold does not have well-defined solutions in the sense It{\^o}. Consequently, one cannot expect that \cref{eq:13} also holds for every $f\in{C^\infty(\text{\sffamily{\upshape{TQ}}})}$ if the stochastic integrals are interpreted in the sense of It{\^o}. However, due to second-order nature of \cref{eq:11}, one can show that an It{\^o} interpretation of \cref{eq:11} nevertheless makes sense for a large class of test functions.%
\begin{definition}\label{def:stochasticSolutionIto}
Let $C^\infty_{\text{It{\^o}}}(\text{\sffamily{\upshape{TQ}}})$ be the set of all $f\in{C^\infty(\text{\sffamily{\upshape{TQ}}})}$ of the form%
\begin{equation}\label{eq:14}
f(q,v) \ = \ g(q) + h(q)v,
\end{equation}%
where $g$ is a smooth real-valued function on $\text{\sffamily{\upshape{Q}}}$ and $h$ is a smooth covector field on $\text{\sffamily{\upshape{Q}}}$. By a \emph{solution of \cref{eq:11} in the sense of It{\^o}}, we mean a continuous $\text{\sffamily{\upshape{TQ}}}$-valued $\mathcal{F}_\ast$-semimartingale $X$ up to an $\mathcal{F}_\ast$-stopping time $\tau$ such that, for every $f\in{C^\infty_{\text{It{\^o}}}(\text{\sffamily{\upshape{TQ}}})}$, the real-valued process $f(X)$ satisfies the stochastic integral equation%
\begin{subequations}\label{eq:15}%
\begin{align}
& f(X(t)) \ = \ f(X_0) \label{eq:15:a} \\
& + \int_0^t\big(Sf(X(s)) + Y_0^{\text{v}}f(X(s)) + R^{\text{v}}f(X(s))\big)\,\mathrm{d}s \label{eq:15:b} \\
& + \sum_{i=1}^m\int_0^tY_i^{\text{v}}f(X(s))\,\mathrm{d}U_\varepsilon^i(s), \qquad 0\leq{t}<\tau, \label{eq:15:c}
\end{align}%
\end{subequations}%
where the $m$ stochastic integrals in \cref{eq:15:c} are understood in the sense of It{\^o}.
\end{definition}%
The next proposition states that the \emph{It{\^o}--Stratonovich controversy} (see e.g.~\cite{vanKampen1981}) does not arise for the interpretations in Definitions~\ref{def:stochasticSolutionStratonovich} and~\ref{def:stochasticSolutionIto}.%
\begin{proposition}\label{prop:ItoStratonovich}
There exists a unique solution of \cref{eq:11} in the sense of It{\^o} up to its explosion time and this solution coincides with the unique solution of \cref{eq:11} in the sense of Stratonovich up to its explosion time.
\end{proposition}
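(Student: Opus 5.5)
The proof rests on the Stratonovich--It{\^o} conversion formula applied to test functions $f\in C^\infty_{\text{It{\^o}}}(\text{\sffamily{\upshape{TQ}}})$. The key observation is that $Y_i^{\text{v}}f$ depends only on $q$ for such $f$. In coordinates, write $f(q,v)=g(q)+h(q)v$. The vertical lift acts by differentiation in the fibre direction, so
\begin{equation*}
Y_i^{\text{v}}f(q,v) \ = \ h(q)Y_i(q),
\end{equation*}
a function on $\text{\sffamily{\upshape{Q}}}$ pulled back to $\text{\sffamily{\upshape{TQ}}}$. The correction term relating $\int Y_i^{\text{v}}f(X)\circ\mathrm{d}U_\varepsilon^i$ to the It{\^o} integral is $\frac12\langle Y_i^{\text{v}}f(X),U_\varepsilon^i\rangle$, where $\langle\cdot,\cdot\rangle$ denotes quadratic covariation. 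I will show that this covariation vanishes whenever $X$ solves either equation.

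The first step is to show that the base process $Q=\pi(X)$ has finite variation. Apply the defining equation, in either sense, to $f=\phi\circ\pi$ with $\phi\in C^\infty(\text{\sffamily{\upshape{Q}}})$. Then $Y_i^{\text{v}}f=0$ and $R^{\text{v}}f=0$, because vertical fields annihilate pulled-back functions, and $Y_0^{\text{v}}f=0$ for the same reason. Also $Sf(q,v)=\mathrm{d}\phi(q)v$. Hence $\phi(Q(t))=\phi(Q_0)+\int_0^t \mathrm{d}\phi(X(s))\,\mathrm{d}s$, so $\phi(Q)$ is a continuous process of finite variation. It follows that $\langle \psi(Q),U_\varepsilon^i\rangle=0$ for every smooth $\psi$ on $\text{\sffamily{\upshape{Q}}}$. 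Applying this with $\psi=hY_i$ shows that the Stratonovich and It{\^o} integrals in \cref{eq:13:c} and \cref{eq:15:c} agree for every $f\in C^\infty_{\text{It{\^o}}}(\text{\sffamily{\upshape{TQ}}})$.

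The second step is the direction Stratonovich $\Rightarrow$ It{\^o}. Let $X$ be the Stratonovich solution from \Cref{prop:existenceUniquenessAndInvariance}. Equation \cref{eq:13} holds for all smooth $f$, in particular for $f\in C^\infty_{\text{It{\^o}}}(\text{\sffamily{\upshape{TQ}}})$. By the first step, its stochastic integrals can be replaced by It{\^o} integrals, so $X$ is an It{\^o} solution up to the same explosion time.

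The third step is the direction It{\^o} $\Rightarrow$ Stratonovich, which gives uniqueness. This is the main obstacle, because \cref{eq:15} is only assumed for the restricted class of test functions. Since $\text{\sffamily{\upshape{Q}}}$ is a closed submanifold of $\mathbb{R}^N$, I would use the functions $f_k(q,v)=q^k$ and $f'_k(q,v)=v^k$, the coordinate components of the embedding $\text{\sffamily{\upshape{TQ}}}\subset\mathbb{R}^{2N}$. These are globally of the form \cref{eq:14}, with $f'_k$ written as $\mathrm{d}x^k|_{\text{\sffamily{\upshape{Q}}}}$ applied to $v$. Equation \cref{eq:15} for them, converted by the first step, says that the ambient coordinates of $X$ satisfy the Stratonovich equation \cref{eq:13} for coordinate functions. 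Now let $\tilde f\in C^\infty(\text{\sffamily{\upshape{TQ}}})$ be arbitrary. Extend it smoothly to a neighbourhood of $\text{\sffamily{\upshape{TQ}}}$ in $\mathbb{R}^{2N}$, which is possible because $\text{\sffamily{\upshape{TQ}}}$ is closed. The Stratonovich chain rule then transfers the equation from the coordinates to $\tilde f(X)$. Here the vector fields are extended as well, and the chain rule only involves their values on $\text{\sffamily{\upshape{TQ}}}$, where $X$ lives. Thus $X$ satisfies \cref{eq:13} for all smooth $f$. By the uniqueness in \Cref{prop:existenceUniquenessAndInvariance}, it coincides with the Stratonovich solution. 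Because the two solutions agree, their explosion times also agree.
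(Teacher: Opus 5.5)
Your proof is correct and follows essentially the same route as the paper. The paper proves Stratonovich $\Rightarrow$ It{\^o} by showing that the conversion term vanishes for $f$ of the form \cref{eq:14}, and It{\^o} $\Rightarrow$ Stratonovich by applying the It{\^o} equation to the ambient coordinate functions of $\text{\sffamily{\upshape{TQ}}}\subset\mathbb{R}^{2N}$ and then using uniqueness of the Stratonovich solution; your Steps 2 and 3 do the same. The differences are only presentational. You phrase the vanishing of the correction as finite variation of $\pi(X)$, which is equivalent to the paper's $Y_i^{\text{v}}Y_j^{\text{v}}f\equiv0$ but makes explicit why the correction also vanishes when $X$ is only assumed to be an It{\^o} solution. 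You also reprove inline the extension-to-all-test-functions argument that the paper cites as Proposition~1.2.7 in \cite{HsuBook}.
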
%
\begin{proof}
\emph{Existence.} By \Cref{prop:existenceUniquenessAndInvariance}, there exists a solution of \cref{eq:11} in the sense of Stratonovich up to its explosion time. Fix an arbitrary $f\in{C^\infty_{\text{It{\^o}}}(\text{\sffamily{\upshape{TQ}}})}$. An application of the It{\^o} formula (see e.g.~\cite{HsuBook}) shows that the real-valued process $f(X)$ satisfies the It{\^o} integral equation%
\begin{subequations}\label{eq:16}%
\begin{align}
& f(X(t)) \ = \ f(X_0) \label{eq:16:a} \\
& + \int_0^t\big(Sf(X(s)) + Y_0^{\text{v}}f(X(s)) + R^{\text{v}}f(X(s))\big)\,\mathrm{d}t \label{eq:16:b} \\
& + \sum_{i=1}^m\int_0^tY_i^{\text{v}}f(X(s))\,\mathrm{d}U_\varepsilon^i(s) \label{eq:16:c} \\
& + \frac{1}{2}\sum_{i,j=1}^m\int_0^tY_i^{\text{v}}Y_j^{\text{v}}f(X(s))\,\mathrm{d}\langle{U_\varepsilon^i,U_\varepsilon^j}\rangle(s). \label{eq:16:d}
\end{align}%
\end{subequations}%
Since $Y_1^{\text{v}},\ldots,Y_m^{\text{v}}$ are vertical lifts of vector fields and since $f$ is of the form \cref{eq:14}, all iterated Lie derivatives $Y_i^{\text{v}}Y_j^{\text{v}}f$ in \cref{eq:16:d} are identically equal to zero. This shows that $X$ is a solution of \cref{eq:11} in the sense of It{\^o} up to its explosion time.

\emph{Uniqueness.} Let $X$ be a solution of \cref{eq:11} in the sense of It{\^o} up to its explosion time. Recall that $\text{\sffamily{\upshape{Q}}}$ is assumed to be a (topologically) closed submanifold of Euclidean space, say $\mathbb{R}^N$. Let $\iota\colon\text{\sffamily{\upshape{TQ}}}\hookrightarrow\mathbb{R}^N\times\mathbb{R}^N$ denote the natural inclusion map. Notice that each component function $f$ of $\iota$ is an element of $C^\infty_{\text{It{\^o}}}(\text{\sffamily{\upshape{TQ}}})$, and therefore the It{\^o} correction term \cref{eq:16:d} vanishes. This in turn implies that, for each component function $f$ of $\iota$, the real-valued process $f(X)$ satisfies the Stratonovich integral equation \cref{eq:13}. It then follows from Proposition~1.2.7 in \cite{HsuBook} that $X$ is a solution of \cref{eq:11} in the sense of Stratonovich up to its explosion time. By \Cref{prop:existenceUniquenessAndInvariance}, this proves uniqueness of solutions.
\end{proof}%
\begin{remark}
A \emph{natural chart for $\text{\sffamily{\upshape{TQ}}}$} is a chart for $\text{\sffamily{\upshape{TQ}}}$ of the form $(\pi^{-1}(U),\bar{\varphi})$, where $\pi\colon{\text{\sffamily{\upshape{TQ}}}}\to\text{\sffamily{\upshape{Q}}}$ is the canonical projection, $U$ is the domain of a chart $(U,\varphi)$ for $\text{\sffamily{\upshape{Q}}}$, and $\bar{\varphi}$ is given by%
\begin{equation*}
\bar{\varphi}(q,v) \ = \ \big(\varphi^1(q),\ldots,\varphi^n(q),\mathrm{d}\varphi^1(q)v,\ldots,\mathrm{d}\varphi^n(q)v\big).
\end{equation*}%
Notice that each component function of a natural chart for $\text{\sffamily{\upshape{TQ}}}$ is of the form \cref{eq:14}. Consequently, for local representations of \cref{eq:11} in natural charts, it does not matter whether the stochastic differential is interpreted in the sense of It{\^o} or in the sense of Stratonovich. This means that the It{\^o}-Stratonovich controversy does also not arise for representations of \cref{eq:11} in natural charts. Moreover, as it is shown in the proof of \Cref{prop:ItoStratonovich}, there is also no difference between the two interpretations if \cref{eq:11} is considered as a stochastic initial value problem in the ambient Euclidean space of $\text{\sffamily{\upshape{TQ}}}$. Thus, the notion of solution does not depend on the embedding into Euclidean space.
\end{remark}%

\subsection{Two Path-wise Interpretations}
A stochastic integral is, by definition, the limit of a sequence of stochastic processes, which converges in probability. Such a limit does not necessarily exist path-wise. However, in the special case of \cref{eq:11}, we will see that it is actually possible to compute solutions path-wise. It turns out that the stochastic integral equation in \Cref{def:stochasticSolutionStratonovich} in equivalent to an $\Omega$-parameter-dependent ordinary initial value problem. To arrive at the ordinary differential equation, we will apply the change of variables \cref{eq:04} from \cite{Bullo2002} path-wise. Let $X$ be a solution of \cref{eq:11} in the sense of Stratonovich; that is, $X$ solves the stochastic initial value problem~\cref{eq:12}. It can be easily seen from a local chart representation of \cref{eq:12:a,eq:12:b} that $X$ is of the form $X=(Q,\dot{Q})$ with an (almost surely) continuously differentiable $\text{\sffamily{\upshape{Q}}}$-valued $\mathcal{F}_\ast$-semimartingale $Q$, where $\dot{Q}$ denotes the path-wise time derivative. For this reason, we may apply the change of variables \cref{eq:04} to $X=(Q,\dot{Q})$ path-wise. Then%
\begin{equation}\label{eq:17}
\tilde{X}(t) \ := \  \big(Q(t),\dot{Q}(t)-\Xi_U(t/\varepsilon,Q(t))\big)
\end{equation}%
defines another $\text{\sffamily{\upshape{TQ}}}$-valued $\mathcal{F}_\ast$-semimartingale $\tilde{X}$. Since the rules of ordinary calculus and Stratonovich calculus are the same, we can perform the same computations as in the proof of Theorem~4.1 in~\cite{Bullo2002} and obtain that $\tilde{X}$ is (almost surely) continuously differentiable and solves the initial value problem%
\begin{subequations}\label{eq:18}%
\begin{align}
\mathrm{d}\tilde{X}(t) & \ = \ \big(S(\tilde{X}(t)) + Y_0^{\text{v}}(\tilde{X}(t)) + R^{\text{v}}(\tilde{X}(t))\big)\,\mathrm{d}t \label{eq:18:a} \\
& \qquad + \sum_{i=1}^mU_\varepsilon^i(t)\,\big[Y_i^{\text{v}},S+R^{\text{v}}\big](\tilde{X}(t))\,\mathrm{d}t \label{eq:18:b} \\
& \qquad - \frac{1}{2}\sum_{i,j=1}^mU_\varepsilon^i(t)\,U_\varepsilon^j(t)\,\langle{Y_i\,\text{:}\,Y_j}\rangle^{\text{v}}(\tilde{X}(t))\mathrm{d}t, \label{eq:18:e} \\
\tilde{X}(0) & \ = \ \big(Q_0, V_0-\Xi_U(0,Q_0)\big), \label{eq:18:f}
\end{align}%
\end{subequations}%
which is the stochastic generalization of \cref{eq:05}. Notice that the right-hand side of \Cref{eq:18} only involves the deterministic differential ``$\mathrm{d}t$.'' Since the sample paths of $U_\varepsilon$ are almost surely continuous, it does not matter whether \Cref{eq:18} is interpreted as a stochastic integral equation or path-wise as an ordinary initial value problem. Therefore, a solution of \cref{eq:18} can be determined in the sense of ordinary differential equations (ODEs), which motivates the next definition.%
\begin{definition}\label{def:stochasticSolutionBullo}
By a \emph{solution of \cref{eq:11} in the sense of ODEs}, we mean a continuous $\text{\sffamily{\upshape{TQ}}}$-valued $\mathcal{F}_\ast$-semimartingale $X$ up to an $\mathcal{F}_\ast$-stopping time $\tau$ for which there exists a solution $\tilde{X}=(\tilde{Q},\tilde{V})$ of \cref{eq:18} on the stochastic time interval $[\vphantom{]}0,\tau\vphantom{(})$ such that%
\begin{equation}\label{eq:19}
X(t) \ = \ \big(\tilde{Q}(t),\tilde{V}(t)+\Xi_U(t/\varepsilon,\tilde{Q}(t))\big), \qquad 0\leq{t}<\tau.
\end{equation}%
\end{definition}%
\begin{proposition}\label{prop:BulloStratonovich}
There exists a unique solution of \cref{eq:11} in the sense of ODEs up to its explosion time and this solution coincides with the unique solution of \cref{eq:11} in the sense of Stratonovich up to its explosion time.
\end{proposition}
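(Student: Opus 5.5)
We prove the statement in two directions: first that the Stratonovich solution is a solution in the sense of ODEs, and then that every solution in the sense of ODEs is a Stratonovich solution. Uniqueness and the identification of explosion times then follow from \Cref{prop:existenceUniquenessAndInvariance}.

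\emph{Stratonovich solution $\Rightarrow$ ODE solution.} Let $X$ be the unique solution of \cref{eq:11} in the sense of Stratonovich up to its explosion time $\tau$, given by \Cref{prop:existenceUniquenessAndInvariance}. Work in natural charts, or in the ambient space $\mathbb{R}^N\times\mathbb{R}^N$ with the coordinate functions of $\iota$. Since $S$ has $q$-component $v$ and the vertical lifts have vanishing $q$-component, the $q$-component of \cref{eq:13} reads $\mathrm{d}Q=\dot Q\,\mathrm{d}t$. Hence $Q$ is a.s.\ $C^1$ with derivative equal to the $V$-component, and $X=(Q,\dot Q)$.

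Define $\tilde X$ by \cref{eq:17}. Each entry of $\tilde X$ is a smooth function of $(X,U_\varepsilon)$, so it is a semimartingale. Apply the Stratonovich chain rule to $f(\tilde X)$ for each $f\in C^\infty(\text{\sffamily{\upshape{TQ}}})$. Because Stratonovich calculus obeys the ordinary chain rule, the computation in the proof of Theorem~4.1 of \cite{Bullo2002} carries over verbatim, with $\dot u_\varepsilon\,\mathrm{d}t$ replaced by $\circ\,\mathrm{d}U_\varepsilon$. The key observation is that the term $\sum_i Y_i^{\text{v}}\circ\mathrm{d}U^i_\varepsilon$ in \cref{eq:12:b} is exactly cancelled by the Stratonovich differential of $-\Xi_U(t/\varepsilon,Q)$. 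The only $Q$-dependence of $\Xi_U$ enters through $\mathrm{d}Q=\dot Q\,\mathrm{d}t$, so it produces only $\mathrm{d}t$-terms. These remaining $\mathrm{d}t$-terms assemble into the brackets $[Y_i^{\text{v}},S+R^{\text{v}}]$ and the symmetric products in \cref{eq:18:b}--\cref{eq:18:e}.

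Concretely, I would do this coordinate-wise. In natural coordinates, write $\tilde V^k=\dot Q^k-Y_i^k(Q)U^i_\varepsilon$. Then
\[
\mathrm{d}\tilde V^k=\mathrm{d}\dot Q^k-\partial_lY_i^k(Q)\dot Q^lU_\varepsilon^i\,\mathrm{d}t-Y_i^k(Q)\circ\mathrm{d}U_\varepsilon^i.
\]
Substitute $\dot Q=\tilde V+Y_jU^j_\varepsilon$ into the Christoffel term $-\Gamma^k_{ab}\dot Q^a\dot Q^b$ and into $R$. The cross terms and quadratic terms then reproduce the coordinate expression of \cref{eq:18}. It follows that $\tilde X$ solves \cref{eq:18} on $[0,\tau)$ with initial value \cref{eq:18:f}, and \cref{eq:19} holds by construction. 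So $X$ is a solution in the sense of ODEs.

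\emph{ODE solution $\Rightarrow$ Stratonovich solution.} Conversely, let $X$ be a solution in the sense of ODEs, with associated $\tilde X$ solving \cref{eq:18}. Path-wise, \cref{eq:18} is an ODE with continuous time dependence and smooth dependence on the state, so its solution is unique up to its explosion time. Run the same computation backwards. Starting from \cref{eq:19}, apply the Stratonovich chain rule to $X=(\tilde Q,\tilde V+\Xi_U(t/\varepsilon,\tilde Q))$. This shows that the coordinate functions of $\iota(X)$ satisfy \cref{eq:13}, and Proposition~1.2.7 of \cite{HsuBook} then upgrades this to all $f\in C^\infty(\text{\sffamily{\upshape{TQ}}})$. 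Hence $X$ is a Stratonovich solution, and \Cref{prop:existenceUniquenessAndInvariance} gives that it is the unique one.

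\emph{Explosion times.} The map $(\tilde q,\tilde v)\mapsto(\tilde q,\tilde v+\Xi_U(t/\varepsilon,\tilde q))$ is a homeomorphism of $\text{\sffamily{\upshape{TQ}}}$ for each $t$, depending continuously on $t$. Therefore $X$ leaves every compact set if and only if $\tilde X$ does, and the two explosion times coincide.

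\emph{Main obstacle.} The hardest step is the justification of the Stratonovich chain-rule computation for the product $Y_i(Q)U^i_\varepsilon$. One must verify that no hidden quadratic-covariation terms survive. This holds because $Q$ is $C^1$, hence of finite variation, so $\langle Y_i^k(Q),U_\varepsilon^i\rangle=0$. After that, the bracket bookkeeping reduces to the deterministic identity established in \cite{Bullo2002}.
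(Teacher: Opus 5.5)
Your proposal is correct and follows essentially the paper's argument: the change of variables \cref{eq:17} and its inverse \cref{eq:19}, justified by the Stratonovich chain rule exactly as in Bullo's deterministic computation, carry solutions of \cref{eq:12} to solutions of \cref{eq:18} and back, and existence and uniqueness for these problems then yields the claim. Your coordinate verification, the remark that $Y_i(Q)$ has finite variation so no covariation term with $U^i_\varepsilon$ appears, and the explosion-time comparison only make explicit details that the paper's short proof leaves implicit.
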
%
\begin{proof}
The same computations as in the proof of Theorem~4.1 in~\cite{Bullo2002} show that, for every solution $X$ of \cref{eq:12}, a solution $\tilde{X}$ of \cref{eq:18} is given by \cref{eq:17}. Conversely, for every solution $\tilde{X}$ of \cref{eq:18}, a solution $X$ of \cref{eq:12} is given by \cref{eq:19}. Now the claim follows from the fact that both \cref{eq:12} and \cref{eq:18} have the existence and uniqueness property of solutions up their explosion times (see \cite[Theorem~1.2.9]{HsuBook}).%
\end{proof}%
Finally, to provide a fourth interpretation of \cref{eq:11}, we recall Sussmann's path-wise definition of solutions of stochastic differential equations from \cite{Sussmann19782}. For every $T>0$ and every metrizable topological space $M$, let $C^0([0,T],M)$ denote the set of continuous maps from $[0,T]$ into $M$ together with topology of uniform convergence.

\emph{Definition (\cite[Definition~1]{Sussmann19782}):}
Let $\omega\in\Omega$ be a sample for which the $\omega$-sample path $u$ of $U_\varepsilon$ is continuous. Let $\tau>0$. An \emph{$\omega$-sample path solution of \cref{eq:11} in the sense of Sussmann up to time $\tau$} is a curve $x\colon[\vphantom{]}0,\tau\vphantom{(})\to{\text{\sffamily{\upshape{TQ}}}}$ so that, for every $T\in(0,\tau)$, the following holds: There exists a continuous map $\Gamma$ from a neighborhood $\mathcal{N}$ of $u|_{[0,T]}$ in $C^0([0,T],\mathbb{R}^m)$ into $C^0([0,T],\text{\sffamily{\upshape{TQ}}})$ such that $\Gamma(u|_{[0,T]})=x|_{[0,T]}$ and, for every $w\in\mathcal{N}$ of class $C^1$, the curve $\Gamma(w)\colon[0,T]\to{\text{\sffamily{\upshape{TQ}}}}$ is of class $C^1$ and solves the ordinary initial value problem%
\begin{subequations}\label{eq:20}%
\begin{align}
\dot{x}(t) & \ = \ S(x(t)) + Y_0^{\text{v}}(x(t)) + R^{\text{v}}(x(t)) \\
& \qquad + \sum_{i=1}^mY_i^{\text{v}}(x(t))\,\dot{w}^i(t), \\
x(0) & \ = \ X_0(\omega).
\end{align}%
\end{subequations}%

Notice \cref{eq:20} is the same as \cref{eq:02} for $w=u^\varepsilon$ and $X_0(\omega)=x_0$.%
\begin{definition}[{\cite[Definition~2]{Sussmann19782}}]\label{def:stochasticSolutionSussmann}
A \emph{solution of \cref{eq:11} in the sense of Sussmann} is a continuous $\text{\sffamily{\upshape{TQ}}}$-valued $\mathcal{F}_\ast$-semimartingale $X$ up to an $\mathcal{F}_\ast$-stopping time $\tau$ such that, for almost every $\omega\in\Omega$ for which the $\omega$-sample path of $U$ is continuous, the $\omega$-sample path of $X$ is an $\omega$-sample path solution of \cref{eq:11} in the sense of Sussmann up to time $\tau(\omega)$.
\end{definition}%
As it is pointed out in Section~7 of \cite{Sussmann19782}, in general, one cannot expect that a stochastic initial value problem has a solution in the sense of Sussmann if the driving semimartingale has multiple components. However, since the Lie brackets of vertical lifts of vector fields vanish, one can show that a unique solution of \cref{eq:11} in the sense of Sussmann exists.%
\begin{proposition}\label{prop:SussmannBullo}
There exists a unique solution of \cref{eq:11} in the sense of Sussmann up to its explosion time and this solution coincides with the unique solution of \cref{eq:11} in the sense of ODEs up to its explosion time.
\end{proposition}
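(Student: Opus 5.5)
The plan is to construct Sussmann's continuous input-to-solution map $\Gamma$ explicitly from the transformed system \cref{eq:18}, and then to read off existence, uniqueness, and the identification with the solution in the sense of ODEs.

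\emph{Definition of $\Gamma$.} Fix a sample $\omega$ with continuous path $u$ of $U_\varepsilon$. For $w\in C^0([0,T],\mathbb{R}^m)$, consider the ordinary initial value problem obtained from \cref{eq:18} by replacing $U_\varepsilon$ with $w$, namely
\begin{align*}
\dot{\tilde{x}}(t) & \ = \ S(\tilde{x}(t))+Y_0^{\text{v}}(\tilde{x}(t))+R^{\text{v}}(\tilde{x}(t)) \\
& \qquad +\sum_{i=1}^m w^i(t)\,\big[Y_i^{\text{v}},S+R^{\text{v}}\big](\tilde{x}(t)) \\
& \qquad -\frac{1}{2}\sum_{i,j=1}^m w^i(t)w^j(t)\,\langle{Y_i\,\text{:}\,Y_j}\rangle^{\text{v}}(\tilde{x}(t)),
\end{align*}
with initial condition $\tilde{x}(0)=(q_0,v_0-\sum_iY_i(q_0)w^i(0))$ and $(q_0,v_0)=X_0(\omega)$. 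Write its solution as $\tilde{x}_w=(\tilde{q}_w,\tilde{v}_w)$. Define
\[
\Gamma(w)(t) \ := \ \Big(\tilde{q}_w(t),\,\tilde{v}_w(t)+\sum_iY_i(\tilde{q}_w(t))\,w^i(t)\Big).
\]
For $\varepsilon$-scaled inputs this matches \cref{eq:19}, with $w=u$ giving $\Xi_U(t/\varepsilon,\cdot)$ evaluated at $\omega$.

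\emph{Steps in order.}
\begin{enumerate}
\item \textbf{Continuity and domain.} Let $T<\tau(\omega)$, where $\tau$ is the explosion time of the ODE solution. The right-hand side above depends continuously on $w$ in the uniform topology, because $w$ enters only through the values $w^i(t)$ and $w^iw^j(t)$, never through derivatives. The vector fields are smooth, hence locally Lipschitz. Continuous dependence of ODE solutions on parameters and data, via a Gronwall argument on a compact neighbourhood of the image of $\tilde{x}_u|_{[0,T]}$, yields a neighbourhood $\mathcal{N}$ of $u|_{[0,T]}$ on which $\tilde{x}_w$ exists on $[0,T]$. On $\mathcal{N}$ the map $w\mapsto\tilde{x}_w$ is continuous into $C^0([0,T],\text{\sffamily{\upshape{TQ}}})$, and so $\Gamma$ is continuous.
\item \textbf{$C^1$ inputs.} For $w\in\mathcal{N}$ of class $C^1$, the deterministic computation of \cite{Bullo2002} (the equivalence of \cref{eq:02} and \cref{eq:05}, with $u$ replaced by $w$) shows that $\Gamma(w)$ is $C^1$ and solves \cref{eq:20}.
\item \textbf{Existence and identification.} Since $\Gamma(u)$ is exactly the $\omega$-path of the solution in the sense of ODEs, that solution is a solution in the sense of Sussmann up to its explosion time.
\item \textbf{Uniqueness.} Suppose $x$ is an $\omega$-sample path solution in the sense of Sussmann with map $\Gamma'$ on $\mathcal{N}'$. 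Take $C^1$ functions $w_k\to u$ uniformly, for instance by mollification, with $w_k\in\mathcal{N}\cap\mathcal{N}'$. Both $\Gamma'(w_k)$ and $\Gamma(w_k)$ solve \cref{eq:20} with the same initial value, so they coincide by ODE uniqueness. Passing to the limit gives $x|_{[0,T]}=\Gamma(u)$. Since $T$ is arbitrary, every Sussmann solution agrees pathwise with the ODE solution. It cannot live past the explosion time, because $\Gamma(u)$ blows up there.
\end{enumerate}

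\emph{Main obstacle.} I expect the delicate point to be Step 1: the uniform-in-$w$ existence interval near $u$, together with the fact that $\Gamma(w)$ stays in $\text{\sffamily{\upshape{TQ}}}$ up to time $T$. I would handle it by working in the ambient Euclidean space and using a standard compact-neighbourhood Gronwall estimate. A secondary check in Step 2 is the initial condition: the initial value $\tilde{x}(0)$ must be taken depending on $w(0)$, so that $\Gamma(w)(0)=X_0(\omega)$ for every $w$. This matters because Sussmann's definition does not fix $w(0)=u(0)$.
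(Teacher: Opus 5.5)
Your proposal is correct and follows essentially the same route as the paper. Both arguments pull the problem back through $\Phi(w)(q,v)=(q,v-\sum_i Y_i(q)w^i)$ to the transformed ODE, in which $w$ enters only through its values and the initial value depends on $w(0)$. Both then use continuous dependence on $w$ in the uniform topology to obtain the continuous map $\Gamma(w)(t)=\Phi(-w(t))(\tilde{x}_w(t))$ on a neighbourhood of $u|_{[0,T]}$, and get uniqueness by approximating $u$ with $C^1$ inputs and invoking ODE uniqueness. Your uniqueness step is slightly more precise than the paper's, since the density actually needed is that of $C^1$ inputs in $C^0([0,T],\mathbb{R}^m)$, and your remark that a Sussmann solution cannot be continued past the explosion time makes explicit a point the paper leaves implicit.
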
%
\begin{proof}
By \Cref{prop:BulloStratonovich}, there exists a unique solution $X$ of \cref{eq:11} in the sense of ODEs up to its explosion time $\tau$, which is given by the unique solution $\tilde{X}$ of \cref{eq:18} through the change of variables \cref{eq:19}. Fix an arbitrary sample $\omega\in\Omega$ for which the $\omega$-sample path $u$ of $U_\varepsilon$ is continuous. The proof is complete if we can show that there exists a unique $\omega$-sample path solution of \cref{eq:11} in the sense of Sussmann up to its explosion time and that this solution coincides with the $\omega$-sample path $x$ of $X$. To this end, fix an arbitrary $T\in(0,\tau(\omega))$. For every $w\in\mathbb{R}^m$, a diffeomorphism $\Phi(w)\colon\text{\sffamily{\upshape{TQ}}}\to\text{\sffamily{\upshape{TQ}}}$ is defined by%
\begin{equation}\label{eq:21}
\Phi(w)(q,v) \ := \ \Big(q,v-\sum_{i=1}^mY_i(q)\,w^i\Big)
\end{equation}%
with inverse map $\Phi(-w)$. The $\omega$-sample path $\tilde{x}$ of $\tilde{X}$ is then given by $\tilde{x}(t)=\Phi(u(t))(x(t))$ for every $t\in[0,T]$. Moreover, $\tilde{x}|_{[0,T]}$ is the unique solution of%
\begin{subequations}\label{eq:22}%
\begin{align}
\dot{\tilde{x}}(t) & \ = \ S(\tilde{x}(t)) + Y_0^{\text{v}}(\tilde{x}(t)) + R^{\text{v}}(\tilde{x}(t)) \label{eq:22:a} \\
& \qquad + \sum_{i=1}^mw^i(t)\,\big[Y_i^{\text{v}},S+R^{\text{v}}\big](\tilde{x}(t)) \label{eq:22:b} \\
& \qquad - \frac{1}{2}\sum_{i,j=1}^mw^i(t)\,w^j(t)\,\langle{Y_i^{\text{v}}\,\text{:}\,Y_j}\rangle^{\text{v}}(\tilde{x}(t)), \label{eq:22:e} \\
\tilde{x}(0) & \ = \ \Phi(w(0))(X_0(\omega)) \label{eq:22:f}
\end{align}%
\end{subequations}%
on $[0,T]$ for $w=u|_{[0,T]}$.

\emph{Existence.} Continuous dependence of solutions of ODEs on (infinite-dimensional) parameters 
guarantees the existence of a sufficiently small neighborhood $\mathcal{N}$ of $u|_{[0,T]}$ in $C^0([0,T],\mathbb{R}^m)$ such that, for every $w\in\mathcal{N}$, there exists a unique solution $\tilde{\Gamma}(w)$ of \cref{eq:22} on $[0,T]$. The so-defined map $\tilde{\Gamma}$ from $\mathcal{N}$ to $C^0([0,T],\text{\sffamily{\upshape{TQ}}})$ is continuous and its values are in fact of class $C^1$. Another continuous map $\Gamma$ from $\mathcal{N}$ to $C^0([0,T],\text{\sffamily{\upshape{TQ}}})$ is defined by%
\begin{equation}\label{eq:23}
\Gamma(w)(t) \ := \ \Phi(-w(t))(\tilde{\Gamma}(t)).
\end{equation}%
Then $\Gamma(u|_{[0,T]})=x|_{[0,T]}$ and, for every $w\in\mathcal{N}$ of class $C^1$, also the curve $\Gamma(w)$ is of class $C^1$ and solves \cref{eq:20}.

\emph{Uniqueness.} Suppose that $\hat{\Gamma}$ is another continuous map from a neighborhood $\hat{\mathcal{N}}$ of $u|_{[0,T]}$ in $C^0([0,T],\mathbb{R}^m)$ into $C^0([0,T],\text{\sffamily{\upshape{TQ}}})$ such that, for every $w\in\hat{\mathcal{N}}$ of class $C^1$, also the curve $\hat{\Gamma}(w)\colon[0,T]\to{\text{\sffamily{\upshape{TQ}}}}$ is of class $C^1$ and solves \cref{eq:20}. Uniqueness of solutions of \cref{eq:20} implies $\Gamma(w)=\hat{\Gamma}(w)$ for every $w\in\mathcal{N}\cap\hat{\mathcal{N}}$ of class $C^1$. Since $\mathcal{N}\cap\hat{\mathcal{N}}$ is a neighborhood of $u|_{[0,T]}$ and since $C^1([0,T],\text{\sffamily{\upshape{TQ}}})$ is dense in $C^0([0,T],\text{\sffamily{\upshape{TQ}}})$, it follows that $x|_{[0,T]}=\hat{\Gamma}(u|_{[0,T]})$.
\end{proof}%
We summarize the statements of Propositions~\ref{prop:existenceUniquenessAndInvariance}, \ref{prop:ItoStratonovich}, \ref{prop:BulloStratonovich}, and \ref{prop:SussmannBullo} in the following result.%
\begin{theorem}\label{thm:2}
There exist unique solutions of \cref{eq:11} in the sense of Stratonovich (\Cref{def:stochasticSolutionStratonovich}), It{\^o} (\Cref{def:stochasticSolutionIto}), ODEs (\Cref{def:stochasticSolutionBullo}), and Sussmann (\Cref{def:stochasticSolutionSussmann}) up to their explosion times, and all of these four solutions coincide (almost surely).
\end{theorem}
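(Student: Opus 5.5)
Theorem~\ref{thm:2} is a summary statement, so the plan is to assemble it from Propositions~\ref{prop:existenceUniquenessAndInvariance}, \ref{prop:ItoStratonovich}, \ref{prop:BulloStratonovich}, and \ref{prop:SussmannBullo}, using the Stratonovich solution as the common reference object. First, Proposition~\ref{prop:existenceUniquenessAndInvariance} gives a unique solution $X$ of \cref{eq:11} in the sense of Stratonovich up to its explosion time $\tau$. Next, Proposition~\ref{prop:ItoStratonovich} gives existence and uniqueness of the It{\^o} solution and identifies it with $X$. The identification rests on the vanishing of the correction term \cref{eq:16:d} for test functions of the form \cref{eq:14}. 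Likewise, Proposition~\ref{prop:BulloStratonovich} gives a unique solution in the sense of ODEs, also equal to $X$; the link is the path-wise change of variables \cref{eq:17}/\cref{eq:19} together with uniqueness for \cref{eq:12} and \cref{eq:18}.

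The remaining link is Sussmann's notion. Proposition~\ref{prop:SussmannBullo} yields a unique Sussmann solution that coincides with the ODE solution, hence with $X$, by transitivity. I will check that all four notions share the same explosion time. Since each solution is the same process $X$ (up to indistinguishability), each maximal stopping time equals $\tau$. Otherwise one solution would extend another past its explosion time, contradicting maximality in the respective uniqueness statement.

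Finally, I will make ``almost surely'' precise. Each identification holds outside a $P$-null set: the set where paths of $U$ fail to be continuous, or where the path-wise ODE \cref{eq:18} fails to hold. A finite union of null sets is null, so all four solutions agree for almost every $\omega$ on $[0,\tau(\omega))$.

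The main obstacle is not in this assembly but in the inputs it relies on, above all Proposition~\ref{prop:SussmannBullo}. The delicate point there is that the neighborhood $\mathcal N$ and the map $\Gamma$ must be constructed so that they depend measurably on $\omega$ and exist for every $T<\tau(\omega)$. If I had to reinforce that step, I would first invoke continuous dependence of \cref{eq:22} on $w$ in the sup-norm. I would then use the explicit diffeomorphisms $\Phi(w)$ from \cref{eq:21} so that $\Gamma$ is visibly continuous. Since the theorem itself only collects these propositions, the proof reduces to citing them and chaining the equalities.
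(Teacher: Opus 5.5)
Your proposal is correct and takes the same approach as the paper: Theorem~\ref{thm:2} is stated there simply as a summary of Propositions~\ref{prop:existenceUniquenessAndInvariance}--\ref{prop:SussmannBullo}, with the equalities chained through the Stratonovich solution exactly as you do. One small correction to your closing remark: the measurability-in-$\omega$ concern about $\mathcal{N}$ and $\Gamma$ is unnecessary, because Sussmann's definition is checked separately for each $\omega$, and the adaptedness of $X$ already comes from the Stratonovich solution.
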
%
Now we can also make sense of the stochastic affine connection system \cref{eq:10:a,eq:10:b} with initial condition \cref{eq:10:c}.%
\begin{definition}\label{def:5}
The \emph{solution of \cref{eq:10} up to its explosion time} is the (almost surely) continuously differentiable $\text{\sffamily{\upshape{Q}}}$-valued $\mathcal{F}_\ast$-semimartingale $Q_\varepsilon:=\pi(X_\varepsilon)$, where $\pi\colon\text{\sffamily{\upshape{TQ}}}\to\text{\sffamily{\upshape{Q}}}$ is the canonical projection and $X_\varepsilon=(Q_\varepsilon,\dot{Q}_\varepsilon)$ is the unique solution of \cref{eq:11} up to its explosion time as in \Cref{thm:2}.
\end{definition}%

\section{Averaging of Stochastic Affine Connection Systems}\label{sec:05}
In the previous section, we have seen that the change of variables \cref{eq:17} turns the stochastic initial value problem \cref{eq:11} into the ordinary initial value problem \cref{eq:18}. One can apply standard averaging results (as in~\cite{SkorokhodBook}, \cite{FreidlinBook}, \cite{LiuBook}) to~\cref{eq:18} if the semimartingale $U$ has the following ergodic properties.%
\begin{assumption}[ergodicity]\label{ass:ergodicProperty}
The continuous semimartingale~$U$%
\begin{enumerate}
	\item takes its values in a compact subset $E$ of $\mathbb{R}^m$,
	\item is \emph{ergodic} in the sense that $U$ admits a normalized invariant measure $\mu$ on $E$ such that%
	\begin{equation}\label{eq:24}
	\lim_{T\to\infty}\frac{1}{T}\int_0^Tf(U(s))\,\mathrm{d}s \ = \ \int_Ef(u)\,\mu(\mathrm{d}u)
	\end{equation}%
	for every $\mu$-integrable real-valued function $f$ on $E$, and
	\item its invariant measure $\mu$ satisfies the zero-mean condition%
	\begin{equation}\label{eq:25}
	\int_Eu\,\mu(\mathrm{d}u) \ = \ 0
	\end{equation}%
\end{enumerate}%
(where \emph{normalized} means $\mu(E)=1$).
\end{assumption}%
\begin{remark}
It is known (e.g.~from Theorems~2.1 and~3.1 in \cite{Khasminskii1960}) that if the semimartingale $U$ is a diffusion process and if $U$ takes its values in a compact subset of $\mathbb{R}^m$, then $U$ admits a unique normalized invariant measure, which satisfies \cref{eq:24}.
\end{remark}%
As an alternative to the deterministic vibrations in \Cref{exmp:Bullo} one can consider the following type of random vibrations.%
\begin{example}\label{exmp:2}
Let $B$ be standard Brownian motion in $\mathbb{R}^m$ with independent components $B^1,\ldots,B^m$. Suppose that, for every $i\in\{1,\ldots,m\}$, the $i$th component $U^i$ of the semimartingale $U$ is given by $U^i(t)=\sin(B^i(t))$. Then $U$ is a diffusion process, which takes its values in the cube $E:=[-1,1]^m$ and its normalized invariant measure $\mu$ is given by%
\begin{equation}\label{eq:26}
\mu(A) \ = \ \frac{1}{\pi^m}\int_A\frac{\mathrm{d}u^1}{\sqrt{1-(u^1)^2}}\cdots\frac{\mathrm{d}u^m}{\sqrt{1-(u^m)^2}}
\end{equation}%
for every Borel subset $A$ of $E$. This follows from the fact that the invariant measure of Brownian motion $(\cos(B^i),\sin(B^i))$ on the unit circle is just the normalized Lebesgue measure.
\end{example}%
Suppose that \Cref{ass:ergodicProperty} is satisfied with normalized invariant measure $\mu$ as therein. Then, condition \cref{eq:25} can be viewed as a stochastic generalization of \cref{eq:06}. As a stochastic generalization of \cref{eq:07}, define the $(m\times{m})$-matrix%
\begin{equation}\label{eq:27}
\Lambda \ := \ \frac{1}{2}\int_Euu^\top\mu(\mathrm{d}u)
\end{equation}%
whose components are denoted by $\Lambda^{ij}$. Using \cref{eq:24}, we can compute the long-time average of the right-hand side of \cref{eq:18} and we obtain the \emph{averaged system}%
\begin{subequations}\label{eq:28}%
\begin{align}
\dot{\bar{X}}(t) & \ = \ S(\bar{X}(t)) + Y_0^{\text{v}}(\bar{X}(t)) + R^{\text{v}}(\bar{X}(t)) \label{eq:28:a} \\
& \qquad - \sum_{i,j=1}^m\Lambda^{ij}\,\langle{Y_i\,\text{:}\,Y_j}\rangle^{\text{v}}(\bar{X}(t)) \label{eq:28:b} \\
\intertext{on $\text{\sffamily{\upshape{TQ}}}$ with initial condition}
\bar{X}(0) & \ = \ \big(Q_0, V_0-\Xi_U(0,Q_0)\big), \label{eq:28:c}
\end{align}%
\end{subequations}%
which is the same as \cref{eq:08} except for a potentially non-deterministic initial condition. One can write the first-order differential equation \cref{eq:28:a,eq:28:b} on $\text{\sffamily{\upshape{TQ}}}$ equivalently as a second-order differential equation on $\text{\sffamily{\upshape{Q}}}$, which results in the affine connection system%
\begin{subequations}\label{eq:29}%
\begin{align}
\nabla_D\dot{\bar{Q}}(t) & \ = \ Y_0(\bar{Q}(t)) + R(\bar{Q}(t))\dot{\bar{Q}}(t) \label{eq:29:a} \\
& \qquad - \sum_{i,j=1}^m\Lambda^{ij}\,\langle{Y_i\,\text{:}\,Y_j}\rangle(\bar{Q}(t)) \label{eq:29:b} \\
\intertext{on $\text{\sffamily{\upshape{Q}}}$ with initial condition}
\bar{Q}(0) & \ = \ Q_0, \qquad \dot{\bar{Q}}(0) \ = \ V_0 - \Xi_U(0,Q_0). \label{eq:29:c}
\end{align}%
\end{subequations}%
Let $|\cdot|$ denote the Euclidean norm on the ambient Euclidean space of $\text{\sffamily{\upshape{TQ}}}$. A direct application of Lemma~4.4 and Theorem~4.3 in the textbook \cite{LiuBook} to the transformed initial value problem \cref{eq:18} leads to the following stochastic generalization of Theorem~4.1 in \cite{Bullo2002} in the terminology of \Cref{def:5}.%
\begin{theorem}\label{thm:mainResult}
Suppose that \Cref{ass:ergodicProperty} is satisfied. Assume that $U(0)=0$. Then, for every deterministic $(Q_0,V_0)\in\text{\sffamily{\upshape{TQ}}}$ and every $T>0$, the following implication holds: If the solution $\bar{Q}$ of \cref{eq:29} exists on $[0,T]$, then, the solution $Q_\varepsilon$ of \cref{eq:10} satisfies%
\begin{subequations}\label{eq:30}%
\begin{align}
\lim_{\varepsilon\to0}\sup_{0\leq{t}\leq{T}}\big|\bar{Q}(t) - Q_\varepsilon(t)\big| & \ = \ 0, \\
\lim_{\varepsilon\to0}\sup_{0\leq{t}\leq{T}}\big|\dot{\bar{Q}}(t) + \Xi(t/\varepsilon,Q_\varepsilon(t)) - \dot{Q}_\varepsilon(t)\big| & \ = \ 0
\end{align}%
\end{subequations}%
almost surely. Furthermore, if $(q_\ast,0)\in\text{\sffamily{\upshape{TQ}}}$ is a locally exponentially stable equilibrium point of \cref{eq:29:a,eq:29:b}, then there exist constants $r,c,\gamma>0$ such that, for every deterministic $(Q_0,V_0)\in\text{\sffamily{\upshape{TQ}}}$ with $|Q_0-q_\ast|\leq{r}$, $|V_0|\leq{r}$ and every $\delta>0$, the solution $Q_\varepsilon$ of \cref{eq:10} satisfies%
\begin{subequations}\label{eq:31}%
\begin{align}
\lim_{\varepsilon\to0}\inf\big\{t\geq0\colon \ \ & \big|q_\ast - Q_\varepsilon(t)\big| > \\
& \ c\,|q_\ast - Q_0|\,\mathrm{e}^{-\gamma\,t} + \delta\big\} \ = \ \infty, \\
\lim_{\varepsilon\to0}\inf\big\{t\geq0\colon \ \ & \big|\Xi(t/\varepsilon,Q_\varepsilon(t)) - \dot{Q}_\varepsilon(t)\big| > \\
& c\,|V_0|\,\mathrm{e}^{-\gamma\,t} + \delta\big\} \ = \ \infty
\end{align}%
\end{subequations}%
almost surely.
\end{theorem}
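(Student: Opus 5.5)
The plan is to work entirely with the transformed, path-wise initial value problem \cref{eq:18}, and not with the SDE \cref{eq:11}. By \Cref{thm:2} and \Cref{def:5}, the solution $Q_\varepsilon$ of \cref{eq:10} is $\pi(X_\varepsilon)$, where $X_\varepsilon$ is recovered from the solution $\tilde{X}_\varepsilon$ of \cref{eq:18} through \cref{eq:19}. Since $U(0)=0$, the initial conditions \cref{eq:18:f} and \cref{eq:28:c} both reduce to the deterministic point $(Q_0,V_0)$. Writing $\tilde{X}_\varepsilon=(\tilde{Q}_\varepsilon,\tilde{V}_\varepsilon)$, we have $Q_\varepsilon=\tilde{Q}_\varepsilon$ and $\dot{Q}_\varepsilon-\Xi_U(t/\varepsilon,Q_\varepsilon)=\tilde{V}_\varepsilon$. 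Hence both limits in \cref{eq:30} follow once we show
\begin{equation*}
\sup_{0\leq t\leq T}\big|\tilde{X}_\varepsilon(t)-\bar{X}(t)\big|\to0 \quad\text{almost surely,}
\end{equation*}
where $\bar{X}=(\bar{Q},\dot{\bar{Q}})$ solves \cref{eq:28}. Similarly, \cref{eq:31} reduces to a statement about $\tilde{X}_\varepsilon$ near the equilibrium $(q_\ast,0)$ of \cref{eq:28:a,eq:28:b}.

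\cref{eq:18} is an ODE $\dot{\tilde{x}}=F(\tilde{x},U(t/\varepsilon))$ on $\text{\sffamily{\upshape{TQ}}}$, with $F(x,u)$ smooth in $x$ and polynomial (degree $\le2$) in $u$. The first step is to localize. I fix a compact neighborhood $K$ of $\bar{X}([0,T])$ in $\text{\sffamily{\upshape{TQ}}}$ and replace the vector fields by smooth compactly supported modifications that agree with the originals on $K$; using \cref{eq:21}, I extend them to the ambient Euclidean space. Since $U$ takes values in the compact set $E$, the modified right-hand side is then globally Lipschitz in $x$, uniformly in $u\in E$, and bounded. Next I compute the average: by \cref{eq:24} applied to $u\mapsto u^i$ and $u\mapsto u^iu^j$, together with \cref{eq:25} and \cref{eq:27}, the time averages of $U^i(s)$ and $U^i(s)U^j(s)$ converge almost surely to $0$ and $2\Lambda^{ij}$. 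The factor $\tfrac12$ in \cref{eq:18:e} therefore produces exactly $\sum_{i,j}\Lambda^{ij}\langle Y_i:Y_j\rangle^{\text{v}}$, so $\bar{F}$ is the vector field of \cref{eq:28}.

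With this in place, I invoke Lemma~4.4 and Theorem~4.3 of \cite{LiuBook}. These give uniform convergence on $[0,T]$ of the fast-time-perturbed ODE to the averaged ODE; almost sure convergence holds because the averaging hypothesis is satisfied path-wise. If I prove it directly, the key estimate is that for a.e.\ $\omega$ and each fixed $x$, the quantity $\int_0^t\big(F(x,U(s/\varepsilon))-\bar{F}(x)\big)\,ds=\varepsilon\int_0^{t/\varepsilon}(\cdots)$ tends to $0$ uniformly in $t\in[0,T]$. This holds because $F$ depends on $u$ only through the finitely many monomials $u^i$, $u^iu^j$ with coefficients smooth in $x$, so only finitely many ergodic averages are needed, and a single full-measure set of $\omega$ suffices. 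I then freeze $x$ on a fine time grid, use the Lipschitz bound, and close with Gronwall to get $\sup_{t\le T}|\tilde{X}_\varepsilon-\bar{X}|\to0$. For small $\varepsilon$ the perturbed solution stays in $K$, so the localization does not change it and no explosion occurs before $T$. This proves \cref{eq:30}.

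For \cref{eq:31}, local exponential stability of $(q_\ast,0)$ for \cref{eq:28} gives $r,c,\gamma$ with $|\bar{X}(t)-(q_\ast,0)|\le c|\bar{X}(0)-(q_\ast,0)|e^{-\gamma t}$. This is the step I expect to be the main obstacle: turning finite-horizon closeness into ``exit time $\to\infty$''. I will use a restart argument. Choose a horizon $T_0$ with $ce^{-\gamma T_0}\le\tfrac12$, and apply the finite-horizon result on $[0,T_0]$, uniformly over initial points in the compact ball of radius $r$. To get this uniformity, I will check that the Gronwall estimate above depends on the initial condition only through the Lipschitz constant and the averaging error, both uniform on a compact set. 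On each window $[kT_0,(k+1)T_0]$, the deviation is then at most $\delta$ plus a geometrically shrinking bound. The time-shifted process $U((kT_0+\cdot)/\varepsilon)$ has the same ergodic averages, but restarting from $\tilde{X}_\varepsilon(kT_0)$ uses a random initial point, and controlling this uniformly across infinitely many windows is delicate. I expect that only showing the exit time exceeds any prescribed $N T_0$ for small $\varepsilon$ is feasible here. That takes $N$ windows, each using the uniform ergodic convergence on a bounded interval $[0,NT_0/\varepsilon]$ in fast time, and it is exactly what the limit statement in \cref{eq:31} requires. Translating back through \cref{eq:19} gives the two lines of \cref{eq:31}.
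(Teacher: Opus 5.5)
\paragraph{Part \cref{eq:30}.} This is the paper's route, and it is correct. The paper's whole argument is the change of variables \cref{eq:17}/\cref{eq:19}, followed by a direct application of Lemma~4.4 and Theorem~4.3 of \cite{LiuBook} to the ODE \cref{eq:18} with fast input $U(t/\varepsilon)$. Your steps make that citation explicit: using $U(0)=0$ to make both initial conditions equal $(Q_0,V_0)$, computing the average from \cref{eq:24}, \cref{eq:25}, \cref{eq:27}, localizing, and running a path-wise Gronwall argument on one full-measure set of $\omega$.

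\paragraph{The restart argument for \cref{eq:31}.} It is superfluous. As you note yourself at the end, the exit time tending to $\infty$ only requires that, for each fixed $N\in\mathbb{N}$, almost surely the exit time is $\geq N$ for all small $\varepsilon$. For data in the $r$-box, $\bar{X}$ exists on $[0,\infty)$ and satisfies $|\bar{X}(t)-(q_\ast,0)|\leq c\,|\bar{X}(0)-(q_\ast,0)|\,\mathrm{e}^{-\gamma t}$. Your finite-horizon result on $[0,N]$, whose full-measure set does not depend on $N$, then gives the bound up to time $N$ as soon as $\sup_{[0,N]}|\tilde{X}_\varepsilon-\bar{X}|<\delta$. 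You need no windows and no uniformity in the initial point, and you lose nothing in $c,\gamma$; the restart would at best give a smaller rate.

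\paragraph{The step that fails: your last sentence.} What your argument yields is that $|q_\ast-Q_\varepsilon(t)|$ and $|\Xi(t/\varepsilon,Q_\varepsilon(t))-\dot{Q}_\varepsilon(t)|$ are each bounded by $c\,|(Q_0,V_0)-(q_\ast,0)|\,\mathrm{e}^{-\gamma t}+\delta$, i.e.\ with the \emph{joint} initial deviation. That is also all the cited result of \cite{LiuBook} provides, since it controls the full state $\tilde{X}_\varepsilon$. It does not give $c\,|q_\ast-Q_0|$ in the first line and $c\,|V_0|$ in the second.

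The separated form cannot be proved, because it is false in general. Take $\text{\sffamily{\upshape{Q}}}=\mathbb{R}$ with the flat connection, $Y_0(q)=-q$, $R(q)v=-v$, $m=1$, $Y_1=0$. Then $\Xi=0$ and $Q_\varepsilon=\bar{Q}$ solves $\ddot{q}=-q-\dot{q}$, for which $(0,0)$ is exponentially stable. With $Q_0=q_\ast=0$ and $V_0\neq0$, the position reaches some $M>0$ in finite time. For $\delta<M$ the first line of \cref{eq:31} is then violated for every $\varepsilon$. Symmetrically, $V_0=0$, $Q_0\neq q_\ast$ violates the second line. You should therefore state and prove \cref{eq:31} with $c\,|(Q_0,V_0)-(q_\ast,0)|$ in both lines, rather than claim that the literal version follows by translating through \cref{eq:19}.
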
%
Thus, the averaging properties of affine connection systems under vibrational control from \cite{Bullo2002} remains valid if the deterministic oscillations are generalized to random vibrations.

\section{Numerical Example: Stochastic Source Seeking with a Nonholonomic Vehicle}\label{sec:06}
In this section, we illustrate the stochastic averaging result, \Cref{thm:mainResult}, by an example; namely a source seeking dynamic unicycle. In this simple example, the configuration manifold $\text{\sffamily{\upshape{Q}}}$ is the special Euclidean group of dimension two, which can be identified with $\mathbb{R}^2\times\mathbb{S}$, where $\mathbb{S}$ denotes the unit circle in $\mathbb{R}^2$. An element $(p,o)$ of $\mathbb{R}^2\times\mathbb{S}$ represents a position $p$ and an orientation $o$. The dynamic unicycle is a mechanical system with mass $m_\shortparallel>0$ and moment of inertia $m_\sphericalangle>0$. Its translational and rotational motion is assumed to be subject to linear velocity dependent damping with damping constants $k_\shortparallel>0$ and $k_\sphericalangle>0$. In standard coordinates $(p^1,p^2,\theta)$ for $\mathbb{R}^2\times\mathbb{S}$, the position is described by a two-component vector $(p^1,p^2)$ and the orientation is described by an angle $\theta$. The unicycle is subject to nonholonomic constraints: It cannot move perpendicular to its current alignment. The remaining translational and rotational velocity components are denoted by $v^\shortparallel$ and $v^\sphericalangle$. In these coordinates, the unicycle model reads%
\begin{subequations}\label{eq:32}%
\begin{align}
\dot{p}^1(t) & \ = \ v^\shortparallel(t)\,\cos(\theta(t)), \\
\dot{p}^2(t) & \ = \ v^\shortparallel(t)\,\sin(\theta(t)), \\
\dot{\theta}(t) & \ = \ v^\sphericalangle(t), \\
m_\shortparallel\,\dot{v}^\shortparallel(t) & \ = \ -k_\shortparallel\,v^\shortparallel(t) + u^\shortparallel(t), \\
m_\sphericalangle\,\dot{v}^\sphericalangle(t) & \ = \ -k_\sphericalangle\,v^\sphericalangle(t) + u^\sphericalangle(t),
\end{align}%
\end{subequations}%
where $u^\shortparallel(t)$ and $u^\sphericalangle(t)$ are real-valued inputs for the translational and rotational motion. One can write \cref{eq:32} more compactly as an affine connection system%
\begin{equation}\label{eq:33}
\nabla_D\dot{q}(t) \ = \ R(q(t))\dot{q}(t) + Y_\shortparallel(q(t))\,u^\shortparallel(t) + Y_\sphericalangle(q(t))\,u^\sphericalangle(t)
\end{equation}%
on $\text{\sffamily{\upshape{Q}}}$, where the bundle map $R$ contains the linear damping terms, the vector fields $Y_\shortparallel$, $Y_\sphericalangle$ describe the control directions, and $\nabla$ is a suitable \emph{constrained connection} (see \cite{BulloBook}), which models the nonholonomic velocity constraint of the unicycle.

The task of the unicycle is to locate the source of an unknown position-dependent scalar signal. This signal is described by an analytically unknown smooth real-valued function $\psi$ on $\mathbb{R}^2$, which is assumed to be sufficiently smooth. Here, ``analytically unknown'' means that the functional dependence of $\psi$ on the position is not assumed to be known. Also the current system state $(q(t),\dot{q}(t))$ of the unicycle is not assumed to be known. Only real-time measurements of the signal are available. To this end, it is assumed that unicycle is equipped with a suitable sensor so that it can measure the value of $\psi$ at the current position of the sensor. The sensor is mounted with positive distance $\rho$ from the vehicle center position $p(t)$ into the direction $o(t)$ of the current alignment. That is, a measurement of $\psi$ in the current configuration $q(t)=(p(t),o(t))$ of the unicycle results in the measured value%
\begin{equation}\label{eq:34}
y(t) \ = \ \psi(p(t)+\rho\,o(t)).
\end{equation}%
We are interested in a feedback law that only requires real-time measurements of \cref{eq:34} and steers the unicycle towards a position where $\psi$ attains a minimum value. Such a method is called a \emph{source seeking method}.%
\begin{figure*}
\includegraphics{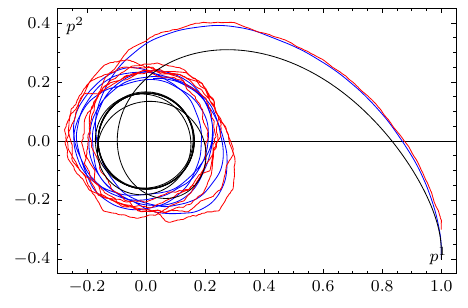} \includegraphics{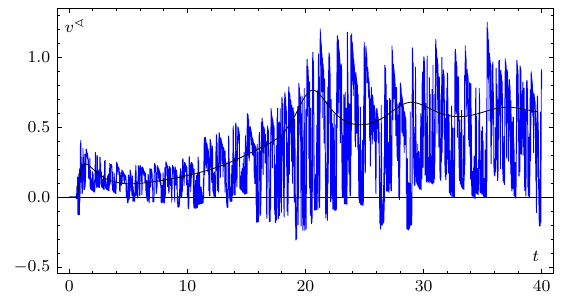}
\caption{Left: Trajectories of the vehicle center of the averaged system (black) and trajectories of the vehicle center (blue) and the sensor (red) of the closed-loop system. Right: Rotational velocity of the averaged system (black) and rotational velocity of the closed-loop system (blue).}
\label{fig:numericalResult}
\end{figure*}%
In the following, we consider a stochastic generalization of the deterministic source seeking method from \cite{Suttner2023}. To this end, let $\alpha\colon\mathbb{R}\to\mathbb{R}$ be a smooth function, which is specified below. Let $\lambda$ and $a$ be positive constants. The stochastic source seeking method from \cite{Suttner2023} for the dynamic unicycle model \cref{eq:32} is given by%
\begin{subequations}\label{eq:35}%
\begin{align}
u^\shortparallel(t) & \ := \ a, \\
u^\sphericalangle(t) & \ := \ \lambda\,\alpha\big(h\,(y(t)-\eta(t))\big)\,\dot{U}_\varepsilon(t),
\end{align}%
\end{subequations}%
where $\varepsilon$ and $h$ are positive control parameters and $\eta(t)$ is the real-valued state of the high-pass filter dynamics%
\begin{equation}\label{eq:36}
\dot{\eta}(t) \ = \ h\,(y(t)-\eta(t))
\end{equation}%
in order to remove a possible offset from the measured signal $y(t)$ in~\cref{eq:34}. The real-valued function $U_\varepsilon$ is a deterministic oscillation with frequency $1/\varepsilon$. It is shown in \cite{Suttner2023} that control law \cref{eq:35} can lead to a circular motion of the unicycle around the source if $\varepsilon>0$ is sufficiently small. Using the stochastic averaging result (\Cref{thm:mainResult}) one can arrive at the same conclusion when the deterministic oscillation is replaced by random vibration. The closed-loop system is then interpreted in the sense of \Cref{def:5}. As in \Cref{exmp:2}, a suitable choice of $U_\varepsilon$ is%
\begin{equation}\label{eq:37}
U_\varepsilon(t) \ := \ \sin(B(t/\varepsilon)),
\end{equation}%
where $B$ is standard Brownian motion on the real line.

To illustrate our stochastic averaging result we simulate system \cref{eq:33} under control law \cref{eq:35} with random vibrations \cref{eq:37}. To this end, we choose the same constants and parameters as in \cite{Suttner2023}. The unknown real-valued function $\psi$ on $\mathbb{R}^2$ is defined by $\psi(p):=-6/(1+|p|^2/5)$. The constants $\rho$ and $a$ are set equal to $0.1$, and the constants $m_\shortparallel$, $m_\sphericalangle$, $k_\shortparallel$, and $k_\sphericalangle$ are set equal $1$. Let $\lambda:=4$ and $h:=10$. The function $\alpha$ is defined by $\alpha(z):=\frac{\sqrt{2}}{h}\,\sqrt{2\,h\,z+\log(2\cosh(2\,h\,z))}$. For this choice of constants and parameters, it is shown in \cite{Suttner2023} that a certain circular motion about the minimizer of $\psi$ is locally exponentially stable for the respective averaged system. A trajectory of this averaged system is indicated by black lines in \Cref{fig:numericalResult}. Because of \Cref{thm:mainResult}, one can expect the same behavior for the approximating closed-loop system if the parameter $\varepsilon>0$ is sufficiently small. We choose $\varepsilon=0.04$. The simulation result in \Cref{fig:numericalResult} is generated for initial position $p(0)=(1.0,-0.4)$, initial orientation $o(0)=(0,1)$, initial velocity $v(0)=0$, and initial filter state $\eta(0)=0$. As predicted by \Cref{thm:mainResult}, the solution of the closed-loop system approximates the solution of the averaged system.

\section{Conclusions}
We have provided an approach to stochastic Lie bracket approximations in which the solutions of a stochastic affine connection system approximate the solutions of a deterministic averaged system, provided that the driving process has suitable ergodic properties. We have shown that the considered class of stochastic affine connection systems can be interpreted in various (seemingly different) ways, but all of these interpretations result in the same solutions. This is in particular true for the established interpretations of stochastic differential equations in the sense of It{\^o}, Stratonovich, and Sussmann.%
\bibliographystyle{abbrv}
\bibliography{bibFile}
\end{document}